\newtheorem{thm}{Theorem}
\newtheorem{corr}[thm]{Corollary}
\newtheorem{lem}[thm]{Lemma}
\newtheorem{prop}[thm]{Proposition}
\theoremstyle{definition}
\newtheorem*{ack}{Acknowledgment}
\newtheorem{rem}[thm]{Remark}
\def\R{\mathbb R}
\def\N{\mathbb N}
\def\pt{\partial}
\begin{document}
\title[Comparison of Steklov eigenvalues and Laplacian eigenvalues]{Comparison of Steklov eigenvalues on a domain and Laplacian eigenvalues on its boundary in Riemannian manifolds}
\author{Changwei Xiong}
\address{Mathematical Sciences Institute, Australian National University, Canberra, ACT 2601, Australia}
\email{\href{mailto:changwei.xiong@anu.edu.au}{changwei.xiong@anu.edu.au}}
\date{\today}
\thanks{}
\subjclass[2010]{{35P15}, {58C40}}
\keywords{Steklov eigenvalue, Laplacian eigenvalue, Riemannian manifold, Weyl eigenvalue asymptotics}

\maketitle

\begin{abstract}
We prove that in Riemannian manifolds the $k$-th Steklov eigenvalue on a domain and the square root of the $k$-th Laplacian eigenvalue on its boundary can be mutually controlled in terms of the maximum principal curvature of the boundary under sectional curvature conditions. As an application, we derive a Weyl-type upper bound for Steklov eigenvalues. A Pohozaev-type identity for harmonic functions on the domain and the min-max variational characterization of both eigenvalues are important ingredients.
\end{abstract}

\section{Introduction}\label{sec1}

Let $(M,g)$ be an $(n+1)$-dimensional Riemannian manifold and let $\Omega\subset M$ be a relatively compact domain with smooth boundary $\Sigma=\pt \Omega$. The Steklov eigenvalue problem, introduced by V.~A.~Steklov in 1895 (see \cite{KKK14}), is
\begin{equation}\label{eqS}
\begin{cases}
\Delta_\Omega u=0, & \text{ in }\Omega,\\
\dfrac{\pt u}{\pt\nu}=\sigma u, & \text{ on }\Sigma,
\end{cases}
\end{equation}
where $\nu$ is the outward unit normal along $\Sigma$. Equivalently, the Steklov eigenvalues form the spectrum of the Dirichlet-to-Neumann map $\Lambda:C^\infty(\Sigma)\rightarrow C^\infty(\Sigma)$ defined by
\begin{equation}
\Lambda f=\frac{\pt (Hf)}{\pt \nu},\: f\in C^\infty(\Sigma),
\end{equation}
where $Hf$ is the harmonic extension of $f$ to the interior of $\Omega$. The Dirichlet-to-Neumann map $\Lambda$ is a first order elliptic pseudodifferential operator \cite[pp. 37--38]{Tay96} and its spectrum is nonnegative, discrete and unbounded:
\begin{equation}
0=\sigma_0<\sigma_1\leq \sigma_2\leq \cdots \nearrow \infty.
\end{equation}
There is an extensive literature concerning the Steklov eigenvalue problem. We refer to the recent survey \cite{GP14} and the references therein for an account of this topic.

On the other hand, better-known is the Laplacian eigenvalue problem. Let $\Delta_\Sigma$ denote the Laplace-Beltrami operator acting on smooth functions on the boundary. Then the Laplacian eigenvalue problem is
\begin{equation}\label{eqL}
-\Delta_\Sigma f= \lambda f, \text{ on }\Sigma,
\end{equation}
and it admits an increasing discrete sequence of non-negative eigenvalues
\begin{equation}
0=\lambda_0<\lambda_1\leq \lambda_2\leq \cdots \nearrow \infty.
\end{equation}
It is well known that the principal symbol of the Dirichlet-to-Neumann map $\Lambda$ is the square root of the principal symbol of the Laplacian $\Delta_\Sigma$. See e.g. \cite[p. 38 and p. 453]{Tay96} and \cite{Sha71}. Consequently, we have
\begin{equation}
\sigma_j\sim \sqrt{\lambda_j},\text{ as }j\rightarrow \infty.
\end{equation}
Recently, Luigi~Provenzano and Joachim~Stubbe \cite{PS16} confirmed this phenomenon explicitly for a $C^2$ domain $\Omega$ in Euclidean spaces. More precisely, they proved that $|\sigma_j-\sqrt{\lambda_j}|$ can be controlled in terms of the geometry of the domain. Our purpose in the present paper is to investigate the same problem for domains in Riemannian manifolds. Our main result can be stated as follows.

\begin{thm}\label{thm1}
Let $(M^{n+1},g)$ be an $(n+1)$-dimensional complete Riemannian manifold. Denote by $K_M$ its sectional curvature. Let $\Omega\subset M^{n+1}$ be a bounded domain with boundary $\Sigma=\pt \Omega$ of class $C^2$. Denote by $II$ the second fundamental form of $\Sigma$.
\begin{enumerate}
  \item If $-a\leq K_M\leq 0$ and $0<\sqrt{a}\leq  II\leq \kappa_+$, then
  \begin{equation}
\lambda_j\leq \sigma_j^2+ n \kappa_{+}\sigma_j,\quad \sigma_j\leq \frac{\kappa_{+}}{2}+\sqrt{\frac{\kappa_{+}^2}{4}+\lambda_j},\quad j\in \N.
\end{equation}
In particular,
\begin{equation}
|\sigma_j-\sqrt{\lambda_j}|\leq \max\{n/2,1\}\kappa_+.
\end{equation}
  \item If $0<  K_M\leq a$ and $0\leq II\leq \kappa_+$, then
  \begin{equation}
\lambda_j\leq \sigma_j^2+ n\sqrt{a+ \kappa_+^2}\sigma_j,\quad \sigma_j\leq \frac{\sqrt{a+ \kappa_+^2}}{2}+\sqrt{\frac{a+ \kappa_+^2}{4}+\lambda_j},\quad j\in\N.
\end{equation}
Likewise,
\begin{equation}
|\sigma_j-\sqrt{\lambda_j}|\leq \max\{n/2,1\}\sqrt{a+ \kappa_+^2}.
\end{equation}
\end{enumerate}

\end{thm}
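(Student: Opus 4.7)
The strategy is the one pioneered by Provenzano--Stubbe \cite{PS16} in the Euclidean case: combine a Pohozaev--Rellich identity with the min-max variational characterizations of both spectra, using a vector field $X$ adapted to $\Sigma$ whose divergence and derivative are controlled by the curvature hypotheses. For any $C^1$ vector field $X$ on $\overline{\Omega}$ with $X|_\Sigma = \nu$ and any harmonic function $u$ on $\Omega$, integrating the divergence of $|\nabla u|^2 X - 2\langle X,\nabla u\rangle \nabla u$ and using $|\nabla u|^2 = |\nabla_\Sigma u|^2 + (\partial_\nu u)^2$ on $\Sigma$ yields
\begin{equation*}
\int_\Sigma |\nabla_\Sigma u|^2 = \int_\Sigma (\partial_\nu u)^2 + \int_\Omega Q(\nabla u), \qquad Q(v) := |v|^2 \operatorname{div}(X) - 2\langle DX\,v,v\rangle.
\end{equation*}

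The plan is to choose $X = -\nabla \rho$ for the distance function $\rho = d(\cdot,\Sigma)$ (cut off or smoothed if $\rho$ develops singularities), so that $X|_\Sigma = \nu$ and $DX$ has $\nabla\rho$ as a zero eigenvector throughout the smooth locus, while the other $n$ eigenvalues coincide with the principal curvatures of the level sets $\{\rho = t\}$. The hypotheses are calibrated so that, via Riccati comparison $S' + S^2 + R = 0$ along the inward normal flow, these eigenvalues remain in $[0,\kappa_+]$ throughout $\Omega$ in case~(1), and in $[0,\sqrt{a+\kappa_+^2}]$ in case~(2) (the extra $a$ term reflecting the sectional curvature contribution). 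In an eigenbasis of $DX$ with eigenvalues $\lambda_0 = 0, \lambda_1,\ldots,\lambda_n$ one has the algebraic identity
\begin{equation*}
Q(v) = \sum_j \Big(\sum_i \lambda_i - 2\lambda_j\Big)\langle e_j, v\rangle^2,
\end{equation*}
and since all $\lambda_i \in [0,\kappa_+]$, each coefficient lies in $[-\kappa_+, n\kappa_+]$, giving the asymmetric pointwise bounds $-\kappa_+ |v|^2 \leq Q(v) \leq n\kappa_+ |v|^2$ (with $\sqrt{a+\kappa_+^2}$ in place of $\kappa_+$ in case~(2)).

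The two inequalities follow from min-max. For the first, take $V = \operatorname{span}(u_0,\ldots,u_j)$ of the first $j+1$ Steklov eigenfunctions: for $u = \sum c_k u_k \in V$ one has $\int_\Omega |\nabla u|^2 \leq \sigma_j \int_\Sigma u^2$ and $\int_\Sigma (\partial_\nu u)^2 \leq \sigma_j^2 \int_\Sigma u^2$, so the identity together with the upper $Q$-bound yields $\int_\Sigma |\nabla_\Sigma u|^2 \leq (\sigma_j^2 + n\kappa_+\sigma_j)\int_\Sigma u^2$; the min-max characterization of $\lambda_j$ then gives the first inequality. For the second, take $V$ to be the span of the first $j+1$ Laplace--Beltrami eigenfunctions and apply the identity to the harmonic extensions $u = Hf$. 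For $f \in V$, $\int_\Sigma |\nabla_\Sigma f|^2 \leq \lambda_j \int_\Sigma f^2$; combined with the lower $Q$-bound this gives $\int_\Sigma (\Lambda f)^2 \leq \lambda_j \int_\Sigma f^2 + \kappa_+ \int_\Sigma f\, \Lambda f$. Writing $R = \int_\Sigma f\,\Lambda f / \int_\Sigma f^2$ and using Cauchy--Schwarz in the form $R^2 \int f^2 \leq \int (\Lambda f)^2$, one deduces $R^2 - \kappa_+ R - \lambda_j \leq 0$, hence $R \leq \kappa_+/2 + \sqrt{\kappa_+^2/4 + \lambda_j}$; min-max for $\sigma_j$ produces the second inequality, and the $\max\{n/2,1\}\kappa_+$ estimate follows by elementary algebra.

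The main obstacle I expect is Step~2: controlling the eigenvalues of $DX$ globally on $\Omega$ via the Riccati equation so that no conjugate points develop before $\rho$ covers $\Omega$, and that these eigenvalues stay in the stated interval, requires the curvature and $II$ hypotheses to be sharply calibrated; case~(2) additionally needs a careful tracking of how positive sectional curvature enters the Riccati ODE to produce the $\sqrt{a+\kappa_+^2}$ factor. The asymmetry between $n\kappa_+$ and $\kappa_+$ in the two inequalities is purely algebraic: the quantity $\sum_i \lambda_i - 2\lambda_j$ can be as large as $n\kappa_+$ (when $\lambda_j = 0$ and all other $\lambda_i = \kappa_+$) but only as negative as $-\kappa_+$ (when $\lambda_j$ alone saturates at $\kappa_+$).
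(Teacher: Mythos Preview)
Your overall architecture is right and matches the paper, but the specific choice $X=-\nabla\rho$ fails at the very point you flag as the ``main obstacle''. Along the \emph{inward} flow the Riccati equation for the principal curvatures of the level sets reads $\kappa'=\kappa^2+K$ (note the sign, opposite to the one you wrote): for a convex boundary the $\kappa_i(t)$ \emph{increase}, and under $K_M\le 0$, $II\le\kappa_+$ the comparison gives only $\kappa_i(t)\le \kappa_+/(1-\kappa_+ t)$, which blows up as $t\to 1/\kappa_+$. So the eigenvalues of $DX=-\nabla^2\rho$ are \emph{not} contained in $[0,\kappa_+]$, even on the smooth locus of $\rho$; no cutoff ``near the singularities'' cures this, and a naive cutoff of $X$ (which has unit length) would inject large terms into $DX$.

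The paper's fix is to multiply your $X$ by the linear damping $h-\rho$: set $\eta=\tfrac12(h-\rho)^2$ on the collar $\omega_h=\{\rho<h\}$ and $F=\nabla\eta$ (extended by $0$). Then $F|_\Sigma=h\,\nu$, $F$ is Lipschitz, and the tangential eigenvalues of $\nabla^2\eta$ are $(h-\rho)\,\kappa_i(\rho)$; the factor $h-\rho$ exactly cancels the Riccati blowup, yielding eigenvalues in $[0,1]$ (with $\rho_{n+1}\equiv 1$ in the normal direction). The Pohozaev identity now carries an extra factor $h$ on the $\Sigma$-integrals; dividing by $h$ and letting $h\to\bar h=1/\kappa_+$ is what produces the constant $\kappa_+$. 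In case~(2) the paper takes $\eta=1-\cos\!\big(\sqrt a(h-\rho)\big)$ and $\bar h$ determined by $\tan(\sqrt a\,\bar h)=\sqrt a/\kappa_+$; the eigenvalue bounds become $[0,a]$ and the boundary factor is $\sqrt a\sin(\sqrt a\,h)$, so in the limit one gets $a/\big(\sqrt a\sin(\sqrt a\,\bar h)\big)=\sqrt{a+\kappa_+^2}$, which is where that constant comes from (not from a uniform Riccati bound on $\kappa_i$).

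Once this modification is made, your pointwise algebra for $Q$ and your two min--max arguments are correct and essentially the same as the paper's; the paper first packages them as inequalities between $\int_\Sigma(\partial_\nu u)^2$ and $\int_\Sigma|\nabla_\Sigma u|^2$ valid for every harmonic $u$ (Proposition~\ref{prop1}) and then feeds in the eigenfunctions, but that is only a cosmetic difference.
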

\begin{rem}
Case (1) of Theorem \ref{thm1} includes the result in Euclidean spaces due to \cite{PS16}, and the one in hyperbolic spaces; while Case (2) includes the spherical result, which degenerates to the Euclidean case as $a\rightarrow 0^+$.
\end{rem}
\begin{rem}
In hyperbolic case, e.g. $K_M=-1$, the condition $II\geq 1$ is called ``horo-convex'', which is a natural convexity. See e.g. \cite{GWW14} where this kind of convexity is essentially required. It is also worth mentioning that, in space forms it is very likely to prove results for even non-convex domains, just as in \cite{PS16}. Here we present the results for convex (horo-convex) domains just for simplicity. And we work with domains in Riemannian manifolds rather than with Riemannian manifolds with boundary, just in order to keep parallel with \cite{PS16}. In addition, we note that under the conditions in Theorem \ref{thm1} the domain $\Omega$ has only one boundary component (see e.g. \cite{Ale77, GM69}).
\end{rem}
\begin{rem}
There are other types of comparison between the Steklov eigenvalue $\sigma_j$ and the Laplacian eigenvalue $\lambda_j$, see e.g. \cite{WX09,CEG11,Kar15,YY17}.
\end{rem}

Therefore any bound for $\lambda_j$ will imply a bound for $\sigma_j$. In particular, by the Weyl-type bound for $\lambda_j$ \cite{Bus79}, we obtain the following:
\begin{corr}\label{corr1}
Notations as in Theorem \ref{thm1}. If $-a\leq K_M\leq 0$ and $0<\sqrt{a}\leq II\leq \kappa_+$, then
\begin{equation}
\sigma_j\leq \kappa_++C_n \left(\frac{j}{|\Sigma|}\right)^{\frac{1}{n}},\: j\in \N;
\end{equation}
if $0< K_M\leq a$ and $0\leq  II\leq \kappa_+$, then
\begin{equation}
\sigma_j\leq \sqrt{a+\kappa_+^2}+C_n \left(\frac{j}{|\Sigma|}\right)^{\frac{1}{n}},\: j\in \N,
\end{equation}
where $C_n$ is a constant depending only on $n$.
\end{corr}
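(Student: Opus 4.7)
The plan is to combine the second inequality of Theorem~\ref{thm1} with a Weyl-type upper bound for the Laplacian spectrum on $\Sigma$. First I would use subadditivity of the square root, $\sqrt{A+B}\leq \sqrt{A}+\sqrt{B}$, to simplify the Theorem~\ref{thm1} bound into the cleaner form
\[
\sigma_j\leq \frac{C}{2}+\sqrt{\frac{C^2}{4}+\lambda_j}\leq C+\sqrt{\lambda_j},
\]
where $C=\kappa_+$ in case (1) and $C=\sqrt{a+\kappa_+^2}$ in case (2). The corollary then reduces to producing a bound of the form $\sqrt{\lambda_j}\leq C_n (j/|\Sigma|)^{1/n}$ with $C_n$ depending only on $n$.

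To obtain such a bound I plan to apply Buser's Weyl-type estimate \cite{Bus79} to the closed $n$-manifold $\Sigma$. That estimate requires a lower bound on the Ricci curvature of $\Sigma$; in the regime $\mathrm{Ric}_\Sigma\geq 0$ it collapses to exactly $\lambda_j\leq C_n^2 (j/|\Sigma|)^{2/n}$, which is what is needed. So the remaining task is to check this Ricci nonnegativity under each of the two sets of hypotheses.

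This I would verify through the Gauss equation. For orthonormal principal directions $X,Y$ of $II$ on $\Sigma$,
\[
K_\Sigma(X,Y)=K_M(X,Y)+\kappa_X\kappa_Y,
\]
where $\kappa_X,\kappa_Y$ are the corresponding principal curvatures. In case (1), $\kappa_X,\kappa_Y\geq \sqrt{a}$ gives $\kappa_X\kappa_Y\geq a$, which exactly absorbs the lower bound $K_M\geq -a$, forcing $K_\Sigma\geq 0$. In case (2) both terms are already nonnegative. Either way $\mathrm{Ric}_\Sigma\geq 0$, so Buser applies, and inserting the resulting estimate into $\sigma_j\leq C+\sqrt{\lambda_j}$ (and absorbing numerical constants into $C_n$) yields the two claimed inequalities.

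I do not anticipate a serious obstacle: the heavy lifting is packaged in Theorem~\ref{thm1}, and both remaining ingredients, the Gauss equation and \cite{Bus79}, are standard. The only point requiring care is to invoke \cite{Bus79} in a form whose constant $C_n$ depends on $n$ alone whenever $\mathrm{Ric}_\Sigma\geq 0$, so that no residual curvature term survives in the final estimate.
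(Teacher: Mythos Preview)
Your proposal is correct and matches the paper's own argument: verify $\mathrm{Ric}_\Sigma\geq 0$ via the Gauss equation, apply Buser's estimate \cite{Bus79} with $\kappa=0$ to obtain $\lambda_j\leq c_n(j/|\Sigma|)^{2/n}$, and insert this into the simplified bound $\sigma_j\leq C+\sqrt{\lambda_j}$ from Theorem~\ref{thm1}. One small polish: to conclude $K_\Sigma\geq 0$ on \emph{all} $2$-planes (not just those spanned by principal directions), note that $II\geq\sqrt{a}\,g$ forces $II(X,X)II(Y,Y)-II(X,Y)^2\geq a$ for any orthonormal pair $X,Y$.
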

Recall the well-known Weyl asymptotic formula (see e.g. \cite{GP14})
\begin{equation}\label{eq-W}
\sigma_j=2\pi \left(\frac{j}{\omega_n|\Sigma|}\right)^{\frac{1}{n}}+O(1),\text{ as }j\rightarrow \infty,
\end{equation}
where $\omega_n$ is the volume of the $n$-dimensional Euclidean unit ball. In view of \eqref{eq-W}, the power $1/n$ in Corollary \ref{corr1} is optimal.

The proof of Theorem \ref{thm1} follows Provenzano and Stubbe's work \cite{PS16}. First we prove a Pohozaev-type identity for a harmonic function $u$ on $\Omega$ by integrating $\Delta_\Omega u\cdot \langle F,\nabla u\rangle=0$ over $\Omega$, where $F$ is any Lipschitz vector field on $\Omega$. Then we choose a suitable $F$ which is supported on a tubular neighbourhood of the boundary $\Sigma$, so as to relate the two boundary integrals $\int_\Sigma \left(\frac{\pt u}{\pt \nu}\right)^2 d\sigma$ and $\int_\Sigma |\nabla_\Sigma u|^2 d\sigma$. Finally the min-max characterization for both eigenvalues implies the required result.

The paper is built up as follows. In Section \ref{sec2} we fix some notations, construct a potential function $\eta(x)$ on the tubular neighbourhood of the boundary in terms of the distance to the boundary and estimate the eigenvalues of its Hessian $\nabla^2\eta$. Then in Section \ref{sec3} we establish for general Lipschitz vector fields a Pohozaev-type identity and choose $F=\nabla \eta(x)$ to obtain the equivalence of $\int_\Sigma \left(\frac{\pt u}{\pt \nu}\right)^2 d\sigma$ and $\int_\Sigma |\nabla_\Sigma u|^2 d\sigma$. The final Section \ref{sec4} contains the proofs of Theorem \ref{thm1} and Corollary \ref{corr1}.

\begin{ack}
The author would like to thank the referee for careful reading of the paper and for
the valuable suggestions and comments which made this paper better and more readable. He is also grateful to Professor Ben~Andrews for stimulating discussions and consistent help. This work was supported by a postdoctoral fellowship funded via ARC Laureate Fellowship FL150100126.
\end{ack}

\section{Preliminaries}\label{sec2}

Let $(M^{n+1},g)$ be an $(n+1)$-dimensional complete Riemannian manifold with Levi-Civita connection $\nabla$. The Riemannian curvature tensor $R$ is given by
\begin{equation}
R(X,Y)Z=-\nabla_X\nabla_YZ+\nabla_Y\nabla_XZ+\nabla_{[X,Y]}Z
\end{equation}
for any $X,Y,Z\in \mathfrak{X}(M)$. Let $p\in M$ and $u,v\in T_pM$ linearly independent. Then the sectional curvature of a two-plane $u\wedge v$ at $p$ is defined by
\begin{equation}
K_M(u\wedge v)=\frac{\langle R(u,v)u,v\rangle}{||u\wedge v||^2}=\frac{\langle R(u,v)u,v\rangle}{||u||^2||v||^2-\langle u,v\rangle^2}.
\end{equation}
Assume $\Omega\subset M$ is a domain with $C^2$ boundary $\Sigma=\pt \Omega$. For any $x\in \Sigma$, let $\nu(x)$ be the outward unit normal to $\Sigma$. Then the second fundamental form $II$ of $\Sigma$ at $x$ is defined by
\begin{equation}\label{eq-SSF}
II(X,Y):=\langle \nabla_X\nu,Y\rangle,\:X,Y\in T\Sigma.
\end{equation}
Denote by $\kappa_1(x),\dots,\kappa_n(x)$ the principal curvatures of $\Sigma$ at $x$. Then there exist $\kappa_-$ and $\kappa_+$ in $\R$ such that
\begin{align*}
\kappa_-&=\inf_{x\in \Sigma}\inf_{i=1,\dots, n} \kappa_i(x),\\
\kappa_+&=\sup_{x\in \Sigma}\sup_{i=1,\dots, n} \kappa_i(x),
\end{align*}
in which case we also write $\kappa_-\leq II\leq \kappa_+$ for short.

 For any $x\in \bar{\Omega}$, set
\begin{equation}
d_0(x):=dist(x,\Sigma).
\end{equation}
Then we define an $h$-tubular neighbourhood $\omega_h$ of $\Sigma$ as
\begin{equation}
\omega_h:=\{x\in \Omega: d_0(x)<h\}.
\end{equation}
Since $\Sigma$ is of class $C^2$, every point in $\omega_h$ has a unique nearest point on $\Sigma$, provided $h>0$ is sufficiently small. Let $\bar{h}$ be a real positive number to be chosen such that for any $h\in (0,\bar{h})$ any point in $\omega_h$ has a unique nearest point on $\Sigma$. In the following we always assume $h\in (0,\bar{h})$.

For the Hessian of the distance function $d_0$, we recall the following comparison result due to A.~Kasue \cite{Kas82,Kas84} (See also \cite[Theorem 1.2.2]{Wan14}).
\begin{lem}
For constants $k$, $\theta\in\R$, let
\begin{equation}
f(t):=
\begin{cases}
\cos \sqrt{k}t-\dfrac{\theta}{\sqrt{k}}\sin \sqrt{k}t,&\text{ if } k> 0,\\
1-\theta t,&\text{ if }k=0,\\
\cosh \sqrt{-k}t-\dfrac{\theta}{\sqrt{-k}}\sinh \sqrt{-k}t,&\text{ if }k<0,
\end{cases}
\quad t\geq 0.
\end{equation}
Let $f^{-1}(0)\in (0,\infty]$ be the first zero point of $f$ and $h^+$ be the supremum of the width of the tubular neighbourhood in which $d_0$ is smooth.
\begin{enumerate}
 \item If $K_M\leq k$ and $II\leq \theta$, then for any $x\in \Omega$ with $d_0(x)<\min\{h^+,f^{-1}(0)\}$ and any unit $X\in T_xM$ orthogonal to $\nabla d_0(x)$,
 \begin{equation}
 \nabla^2 d_0(X,X)\geq \frac{f'}{f}(d_0(x)).
 \end{equation}
  \item If $K_M\geq k$ and $II\geq \theta$, then for any $x\in \Omega$ with $d_0(x)<\min\{h^+,f^{-1}(0)\}$ and any unit $X\in T_xM$ orthogonal to $\nabla d_0(x)$,
 \begin{equation}
 \nabla^2 d_0(X,X)\leq \frac{f'}{f}(d_0(x)).
 \end{equation}
 \end{enumerate}
\end{lem}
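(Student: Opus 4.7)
The plan is to derive a matrix Riccati equation for the shape operator of the level sets $\{d_0=t\}$ along a normal geodesic from $\Sigma$ to $x$, and then compare it with the scalar Riccati ODE satisfied by $\phi(t):=f'(t)/f(t)$. For $x\in\Omega$ with $t_0:=d_0(x)<\min\{h^+,f^{-1}(0)\}$, the smoothness of $d_0$ on the tubular neighbourhood supplies a unique minimizing unit-speed geodesic $\gamma:[0,t_0]\to\bar\Omega$ with $\gamma(0)\in\Sigma$, $\gamma'(0)=-\nu(\gamma(0))$, $\gamma(t_0)=x$, and $\gamma'(t)=\nabla d_0(\gamma(t))$. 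Define the shape operator $S(t):\gamma'(t)^\perp\to\gamma'(t)^\perp$ by $S(t)Y=\nabla_Y\nabla d_0|_{\gamma(t)}$, so that $\nabla^2 d_0(Y,Z)|_{\gamma(t)}=\langle S(t)Y,Z\rangle$. A direct computation using the definition of the curvature tensor together with $\nabla_{\gamma'}\gamma'=0$ shows that $S$ satisfies the matrix Riccati equation
\[\dot S(t)+S(t)^2+\mathcal{R}(t)=0,\]
where $\mathcal{R}(t)Y:=R(Y,\gamma'(t))\gamma'(t)$ is the Jacobi operator on $\gamma'(t)^\perp$, whose diagonal entries in an orthonormal frame are the sectional curvatures $K_M(\cdot\wedge\gamma')$. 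At $t=0$ one has $\nabla d_0|_\Sigma=-\nu$, so by \eqref{eq-SSF} the initial value is $S(0)=-II$.

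On the model side, differentiating $\phi=f'/f$ and using $f''+kf=0$ produces $\dot\phi+\phi^2+k=0$ with $\phi(0)=-\theta$, and $\phi$ is smooth on $[0,f^{-1}(0))$; its tensorial counterpart $\phi(t)I$ then satisfies the matrix Riccati equation with curvature term $kI$. I would invoke the matrix Riccati comparison theorem (cf.\ the Eschenburg--Heintze argument): two symmetric solutions of $\dot S+S^2+\mathcal{R}_i=0$ with $\mathcal{R}_1\leq\mathcal{R}_2$ and $S_1(0)\geq S_2(0)$ satisfy $S_1(t)\geq S_2(t)$ on their common interval of existence. In case (1), $K_M\leq k$ and $II\leq\theta$ translate to $\mathcal{R}\leq kI$ and $S(0)=-II\geq-\theta I=\phi(0)I$, so $S(t)\geq\phi(t)I$, which is precisely the claimed lower bound on $\nabla^2 d_0(X,X)$. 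Case (2) is symmetric: reversed inequalities yield $S\leq\phi I$, giving the upper bound.

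The main obstacle is case (1). A naive scalar argument -- fix a parallel unit $X\in\gamma'^\perp$ along $\gamma$, set $\varphi(t)=\langle S(t)X,X\rangle$, and differentiate to obtain $\dot\varphi=-\|S(t)X\|^2-K_M(X\wedge\gamma')$ -- combined with the Cauchy--Schwarz bound $\|S(t)X\|^2\geq\varphi(t)^2$ only delivers $\dot\varphi+\varphi^2+K_M\leq 0$. This yields the upper bound $\varphi\leq\phi$ under $K_M\geq k$ (case (2)) via the scalar Riccati comparison, but in case (1) the Cauchy--Schwarz inequality points the wrong way. One therefore needs either the full matrix formulation, or a delicate analysis of the smallest eigenvalue of $S(t)$, which is only Lipschitz at eigenvalue crossings. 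The cleanest route is via the substitution $S=\dot A\,A^{-1}$ for a symmetric matrix-valued Jacobi tensor $A$, reducing the Riccati inequality to a Sturm-type comparison of Jacobi equations $\ddot A+\mathcal{R}A=0$, which underlies Kasue's original proof. The restriction $t_0<f^{-1}(0)$ ensures $\phi$ remains finite on $[0,t_0]$, which is precisely the range where the comparison makes sense.
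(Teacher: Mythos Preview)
Your argument is correct and is precisely the standard Riccati/Jacobi comparison approach that underlies Kasue's theorem. Note, however, that the paper does not supply its own proof of this lemma: it is quoted verbatim from Kasue \cite{Kas82,Kas84} (see also \cite[Theorem~1.2.2]{Wan14}) and used as a black box in the subsequent Hessian estimates for $\eta$. So there is no ``paper's proof'' to compare against---you have simply filled in the cited result.

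Two minor remarks on the write-up. First, the paper's curvature convention is $R(X,Y)Z=-\nabla_X\nabla_YZ+\nabla_Y\nabla_XZ+\nabla_{[X,Y]}Z$ with $K_M(u\wedge v)=\langle R(u,v)u,v\rangle$; under this sign the operator appearing in the Riccati equation $\dot S+S^2+\tilde{\mathcal R}=0$ is $\tilde{\mathcal R}Y=R(\gamma',Y)\gamma'$ rather than $R(Y,\gamma')\gamma'$, so that $\langle\tilde{\mathcal R}Y,Y\rangle=K_M(Y\wedge\gamma')$ as needed. This is purely a convention mismatch and does not affect the argument. Second, your diagnosis of the asymmetry between cases (1) and (2) is exactly right: the scalar Riccati inequality obtained from a single parallel direction only yields the upper bound, and the lower bound genuinely requires the matrix comparison (or equivalently the Jacobi-tensor/Sturm formulation you mention), which is how Kasue's original proof proceeds.
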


Denote the parallel hypersurface of $\Sigma$ with distance $h$ by
\begin{equation}
\Sigma_h=\pt \omega_h\setminus \Sigma.
\end{equation}
Define
\begin{equation}
d(x):=dist(x,\Sigma_h).
\end{equation}
So $d(x)=h-d_0(x)$, $\nabla d(x)=-\nabla d_0(x)$ and $\nabla^2 d(x)=-\nabla^2 d_0(x)$. Moreover, define
\begin{equation}
\eta(x):=
\begin{cases}
\dfrac{1}{2}d(x)^2,&\text{ if }K_M\leq 0,\\
1-\cos \sqrt{a}d(x),&\text{ if }0<K_M\leq a.
\end{cases}
\end{equation}
\begin{rem}
The definition of $\eta(x)$ in Euclidean case is the same as in \cite{PS16}; while in the case $0<K_M\leq a$ it is chosen such that $\nabla \eta(x)$ is a conformal vector field for a geodesic ball $\Omega$ in spheres, which is inspired by \cite{LWX14}. For these two model cases, $\nabla \eta$ being the conformal vector field leads to $\nabla^2 \eta=c(x)g$ for some smooth function $c(x)$, which is useful in simplifying the analysis.
\end{rem}

Let $\{\rho_i(x)\}_{i=1}^{n+1}$ be the eigenvalues of $\nabla^2 \eta(x)$. Assume that $\rho_1(x)\leq \rho_2(x)\leq \dots\leq \rho_{n+1}(x)$.
Then we can estimate these eigenvalues as follows.
\begin{lem}
Notations as above.
\begin{enumerate}
\item If $-a\leq K_M\leq 0$ and $0<\sqrt{a}\leq \kappa_-\leq II\leq \kappa_+$, then  the eigenvalues of $\nabla^2\eta(x)$ satisfy
\begin{equation}
0\leq \rho_i(x)\leq 1,\:1\leq i\leq n;\:\rho_{n+1}(x)=1.
\end{equation}
\item If $0<  K_M\leq a$ and $0\leq \kappa_-\leq II\leq \kappa_+$, then the eigenvalues of $\nabla^2\eta(x)$ satisfy
\begin{equation}
0\leq \rho_i(x)\leq a\cos \sqrt{a}d(x),\:1\leq i\leq n;\:\rho_{n+1}(x)=a\cos \sqrt{a}d(x).
\end{equation}
\end{enumerate}
\end{lem}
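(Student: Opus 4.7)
The plan is to exploit that $\eta$ is a radial function of the boundary distance, so that $\nabla^2\eta$ splits naturally into a normal component, which yields $\rho_{n+1}$ by direct differentiation, and a tangential component controlled by $\nabla^2 d_0$, on which the comparison lemma already provides sharp two-sided bounds. Writing $\eta=\Phi(d)$ with $\Phi(s)=s^2/2$ in Case (1) and $\Phi(s)=1-\cos(\sqrt{a}\,s)$ in Case (2), the identity
\[
\nabla^2\eta=\Phi''(d)\,\nabla d\otimes\nabla d+\Phi'(d)\,\nabla^2 d,
\]
combined with $\nabla d=-\nabla d_0$ and the relation $(\nabla^2 d_0)(\nabla d_0,\cdot)\equiv 0$ (from differentiating $|\nabla d_0|^2=1$), identifies $\nabla d_0$ as an eigenvector of $\nabla^2\eta$ with eigenvalue $\Phi''(d)$. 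This gives the asserted values $\rho_{n+1}=1$ in Case (1) and $\rho_{n+1}=a\cos(\sqrt{a}\,d)$ in Case (2). On $(\nabla d_0)^{\perp}$ the remaining eigenvalues of $\nabla^2\eta$ are $-\Phi'(d)\mu_i$, where $\mu_i$ are the eigenvalues of $\nabla^2 d_0$ restricted to $(\nabla d_0)^{\perp}$.

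Next I would invoke the preceding comparison lemma twice in each case, once per side of the hypotheses on $K_M$ and $II$. In Case (1), part (2) with $(k,\theta)=(-a,\sqrt{a})$ gives $f(t)=e^{-\sqrt{a}t}$, hence $\mu_i\leq -\sqrt{a}$ and $-d\mu_i\geq\sqrt{a}\,d\geq 0$; part (1) with $(k,\theta)=(0,\kappa_+)$ gives $f(t)=1-\kappa_+ t$ and $\mu_i\geq -\kappa_+/(1-\kappa_+ d_0)$, so $-d\mu_i\leq\kappa_+(h-d_0)/(1-\kappa_+ d_0)$. Since $d+d_0=h$, this last bound is $\leq 1$ precisely when $\kappa_+ h\leq 1$, so I would shrink $\bar h$ so that $\bar h<1/\kappa_+$, which is exactly the requirement $\bar h<f^{-1}(0)$ coming from Kasue's lemma. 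In Case (2), part (2) with $(k,\theta)=(0,0)$ gives $f\equiv 1$ and $\mu_i\leq 0$, yielding the lower bound $-\Phi'(d)\mu_i\geq 0$ immediately.

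The only non-routine step is the upper bound in Case (2). I would apply part (1) with $(k,\theta)=(a,\kappa_+)$ and rewrite
\[
f(t)=\cos(\sqrt{a}t)-\frac{\kappa_+}{\sqrt{a}}\sin(\sqrt{a}t)=\sqrt{1+\kappa_+^2/a}\,\cos(\sqrt{a}t+\phi),\qquad \phi:=\arctan\frac{\kappa_+}{\sqrt{a}},
\]
so that $f'/f=-\sqrt{a}\tan(\sqrt{a}t+\phi)$ and hence $\mu_i\geq -\sqrt{a}\tan(\sqrt{a}d_0+\phi)$. The tangential eigenvalue of $\nabla^2\eta$ is then at most $a\sin(\sqrt{a}d)\tan(\sqrt{a}d_0+\phi)$, and the desired bound $\leq a\cos(\sqrt{a}d)$ reduces, via the elementary trigonometric identity $\sin X\tan Y\leq \cos X\iff X+Y\leq\pi/2$, to the condition $\sqrt{a}h+\phi\leq\pi/2$. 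I would therefore shrink $\bar h$ further so that $\sqrt{a}\bar h<\pi/2-\phi=\arctan(\sqrt{a}/\kappa_+)$, which again coincides with $f^{-1}(0)$. The main obstacle will be carrying out this trigonometric simplification cleanly and bookkeeping the various smallness constraints on $\bar h$ (for $d_0$ to be smooth on $\omega_h$, for Kasue's lemma to apply, and for the eigenvalue bounds to hold), checking that they are compatible; everything else is a direct substitution.
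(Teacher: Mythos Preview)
Your proposal is correct and follows essentially the same route as the paper: compute $\nabla^2\eta$ via $\eta=\Phi(d)$, read off the normal eigenvalue $\Phi''(d)$, and bound the tangential eigenvalues using Kasue's Hessian comparison with the same parameter pairs $(k,\theta)=(0,\kappa_+)$ and $(a,\kappa_+)$ for the upper bounds, together with the identical choice of $\bar h$ (namely $\bar h=\kappa_+^{-1}$ in Case~(1) and $\tan(\sqrt a\,\bar h)=\sqrt a/\kappa_+$ in Case~(2)). The only cosmetic differences are that for the lower bounds you plug in $\theta=\sqrt a$ and $\theta=0$ directly rather than $\theta=\kappa_-$ as the paper does, and you package the Case~(2) upper bound via the amplitude--phase form of $f$; both lead to the same inequalities.
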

\begin{proof}
(1) First we notice that for any $X,Y\in T_xM$,
\begin{equation*}
\nabla^2 \eta(x)(X,Y)=\langle \nabla d,X\rangle \langle \nabla d,Y\rangle+d(x)\nabla^2 d(X,Y).
\end{equation*}
Then there is an eigenvalue $\rho_{n+1}(x)=1$ corresponding to the direction $\nabla d$. Assume that $\{E_i\}_{i=1}^n$ of unit length are the directions corresponding to $\{\rho_i(x)\}_{i=1}^n$. Then for any $E_i$:
\begin{equation}
\rho_i(x)=\nabla^2 \eta(x)(E_i,E_i)\leq -d(x)\frac{-\kappa_+}{1-\kappa_+ d_0(x)}.
\end{equation}
Note that by \cite[Theorem 3.11]{DL91} we can choose $\bar{h}=\kappa_+^{-1}>h$. Thus we have
\begin{equation}
\rho_i(x)\leq \frac{d(x)}{\kappa_+^{-1}- d_0(x)}= \frac{d(x)}{\bar{h}- d_0(x)}\leq 1.
\end{equation}

Similarly, we have
\begin{align*}
\rho_i(x)=\nabla^2 \eta(x)(E_i,E_i)&\geq -d(x)\frac{\sqrt{a}\sinh \sqrt{a}d_0-\kappa_-\cosh \sqrt{a}d_0}{\cosh \sqrt{a}d_0-\dfrac{\kappa_-}{\sqrt{a}}\sinh \sqrt{a}d_0}\\
&\geq \sqrt{a}d(x)\geq 0.
\end{align*}

(2) In this case we notice that for any $X,Y\in T_xM$,
\begin{equation*}
\nabla^2 \eta(x)(X,Y)=a\cos \sqrt{a}d(x)\langle \nabla d,X\rangle \langle \nabla d,Y\rangle+\sqrt{a}\sin \sqrt{a}d(x)\nabla^2 d(X,Y).
\end{equation*}
Then there is an eigenvalue $\rho_{n+1}(x)=a\cos \sqrt{a}d(x)$ corresponding to the direction $\nabla d$. Assume that $\{E_i\}_{i=1}^n$ of unit length are the directions corresponding to $\{\rho_i(x)\}_{i=1}^n$. Then for any $E_i$:
\begin{equation}
\rho_i(x)=\nabla^2 \eta(x)(E_i,E_i)\leq -\sqrt{a}\sin \sqrt{a}d(x)\frac{-\sqrt{a}\sin \sqrt{a}d_0-\kappa_+\cos \sqrt{a}d_0}{\cos \sqrt{a}d_0-\dfrac{\kappa_+}{\sqrt{a}}\sin \sqrt{a}d_0}.
\end{equation}
Note that by \cite[Theorems 3.11 and 3.22]{DL91} we can choose $\bar{h}$ such that $\tan(\sqrt{a}\bar{h})=\dfrac{\sqrt{a}}{\kappa_+}$. Therefore, we obtain:
\begin{equation}
\rho_i(x)\leq a\sin \sqrt{a}d(x)\frac{\dfrac{\sqrt{a}}{\kappa_+}\tan \sqrt{a}d_0+1}{\dfrac{\sqrt{a}}{\kappa_+}-\tan \sqrt{a}d_0}=\frac{a\sin \sqrt{a}d(x)}{\tan\sqrt{a}(\bar{h}-d_0(x))}\leq a\cos \sqrt{a}d(x).
\end{equation}
Similarly, we have
\begin{align*}
\rho_i(x)&\geq -\sqrt{a}\sin \sqrt{a}d(x)\frac{-\kappa_-}{1-\kappa_-d_0(x)}\geq 0.
\end{align*}

\end{proof}

\section{Pohozaev identity and its consequences}\label{sec3}

In this section we aim at proving the equivalence of two integrals $\int_\Sigma(\frac{\pt u}{\pt \nu})^2d\sigma$ and $\int_\Sigma |\nabla_\Sigma u|^2d\sigma$ for a harmonic function $u$ on $\Omega$. First we establish the following Pohozaev identity for $u$. The proof for it is similar to that in \cite{PS16}, except that here we need to take the covariant derivatives with respect to the connection $\nabla$.
\begin{lem}\label{lem9}
Let $F\in \Gamma(T\Omega)$ be a Lipschitz vector field. Let $u\in H^2(\Omega)$ with $\Delta u=0$ in $\Omega$. Then
\begin{align*}
\int_\Sigma \frac{\pt u}{\pt \nu}\langle F,\nabla u\rangle d\sigma&-\frac{1}{2}\int_\Sigma |\nabla u|^2 \langle F,\nu\rangle d\sigma\\
&+\frac{1}{2}\int_\Omega |\nabla u|^2\cdot div F dv-\int_\Omega \nabla F(\nabla u,\nabla u)dv=0.
\end{align*}
\end{lem}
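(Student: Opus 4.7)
The plan is to start from the pointwise identity $\Delta u \cdot \langle F,\nabla u\rangle = 0$, which holds almost everywhere since $u$ is harmonic, and to integrate it against the volume form of $\Omega$. Writing $\Delta u = \mathrm{div}(\nabla u)$ and applying the divergence theorem once would yield
\begin{equation*}
0=\int_\Omega \Delta u\,\langle F,\nabla u\rangle\,dv = \int_\Sigma \frac{\pt u}{\pt \nu}\langle F,\nabla u\rangle\,d\sigma - \int_\Omega \langle \nabla u,\,\nabla\langle F,\nabla u\rangle\rangle\,dv.
\end{equation*}
The task then reduces to rewriting the remaining interior integrand and performing a second integration by parts.

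The key computation is to expand the inner term via metric compatibility of the Levi-Civita connection:
\begin{equation*}
\langle \nabla u,\,\nabla\langle F,\nabla u\rangle\rangle = \langle \nabla_{\nabla u}F,\nabla u\rangle + \langle F,\nabla_{\nabla u}\nabla u\rangle.
\end{equation*}
The first term on the right is exactly $\nabla F(\nabla u,\nabla u)$, interpreting $\nabla F$ as a bilinear form in the pattern $(X,Y)\mapsto \langle \nabla_X F,Y\rangle$. For the second term I would invoke the symmetry of the Hessian $\nabla^2 u$, which gives in the Riemannian setting the identity
\begin{equation*}
\tfrac{1}{2}\nabla |\nabla u|^2 = \nabla_{\nabla u}\nabla u,
\end{equation*}
derived by testing against an arbitrary $X$: $X|\nabla u|^2 = 2\langle \nabla_X\nabla u,\nabla u\rangle = 2\nabla^2 u(X,\nabla u) = 2\langle \nabla_{\nabla u}\nabla u,X\rangle$. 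This is the one place where the covariant-derivative bookkeeping differs from the Euclidean argument of \cite{PS16}, so I would highlight it explicitly.

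With these identities in hand, applying the divergence theorem a second time to the term
\begin{equation*}
\tfrac{1}{2}\int_\Omega \langle F,\nabla |\nabla u|^2\rangle\,dv = \tfrac{1}{2}\int_\Sigma |\nabla u|^2\langle F,\nu\rangle\,d\sigma - \tfrac{1}{2}\int_\Omega |\nabla u|^2\,\mathrm{div}\,F\,dv
\end{equation*}
and assembling the pieces yields the claimed Pohozaev identity.

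The only genuine obstacle is regularity: $F$ is only Lipschitz and $u$ is only in $H^2(\Omega)$, so $\mathrm{div}\,F$ and $\nabla F$ exist merely almost everywhere, while the boundary traces $\frac{\pt u}{\pt \nu}$ and $\nabla u|_\Sigma$ must make sense. I expect the standard way to handle this is a density argument: smooth $F$ by convolution in local charts and approximate $u$ by harmonic functions on slightly shrunken subdomains $\Omega_\varepsilon\Subset \Omega$, apply the identity in the smooth setting where all manipulations above are classical, and pass to the limit using the $H^2$ trace theorem on $\Sigma$ and dominated convergence for the bulk integrals. This is exactly the strategy used in \cite{PS16}, and the Riemannian setting introduces no new difficulty beyond the covariant-derivative identity noted above.
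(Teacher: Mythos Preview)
Your proposal is correct and follows essentially the same route as the paper: integrate $\Delta u\cdot\langle F,\nabla u\rangle=0$, apply the divergence theorem, split $\langle\nabla u,\nabla\langle F,\nabla u\rangle\rangle$ into the $\nabla F(\nabla u,\nabla u)$ piece and a Hessian term, and integrate the latter by parts a second time. The only cosmetic difference is that the paper handles the Hessian term via an index computation in a local frame, showing $2\nabla^2 u(F,\nabla u)=\mathrm{div}(|\nabla u|^2 F)-|\nabla u|^2\,\mathrm{div}\,F$, whereas you use the equivalent coordinate-free identity $\nabla_{\nabla u}\nabla u=\tfrac{1}{2}\nabla|\nabla u|^2$; your added density remark for the Lipschitz/$H^2$ regularity is more careful than the paper, which proceeds formally.
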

Here and in the sequel $H^k(\Omega)$ denotes the standard Sobolev space $W^{k,2}(\Omega)$.

\begin{proof}
Since $u$ is harmonic, there holds $\Delta u\cdot\langle F,\nabla u\rangle=0$ in $\Omega$. Then we obtain
\begin{align}
0&=\int_\Omega \Delta u\cdot\langle F,\nabla u\rangle dv=\int_\Sigma \frac{\pt u}{\pt \nu}\langle F,\nabla u\rangle d\sigma-\int_\Omega \langle \nabla u,\nabla \langle F,\nabla u\rangle \rangle dv\nonumber \\
&=\int_\Sigma \frac{\pt u}{\pt \nu}\langle F,\nabla u\rangle d\sigma-\int_\Omega \nabla F(\nabla u,\nabla u) dv-\int_\Omega \nabla^2 u(F,\nabla u)dv.\label{eq1}
\end{align}
Now take $\{e_i\}_{i=1}^{n+1}$ as an orthonormal local frame for $T\Omega$. So we have
\begin{align*}
\nabla^2 u(F,\nabla u) &=  u_{ij}F_iu_j= (u_j F_i u_j)_i-u_jF_{i,i}u_j-u_jF_iu_{ji}\\
&=div(|\nabla u|^2 F)-|\nabla u|^2 \cdot div F-\nabla^2 u(F,\nabla u),
\end{align*}
which holds indeed globally. Then integration by parts yields
\begin{equation}
\int_\Omega \nabla^2 u(F,\nabla u)dv=\frac{1}{2}\int_\Sigma |\nabla u|^2 \langle F,\nu\rangle d\sigma-\frac{1}{2}\int_\Omega |\nabla u|^2\cdot div Fdv.\label{eq2}
\end{equation}
Plugging \eqref{eq2} into \eqref{eq1}, we complete the proof of the lemma.

\end{proof}

Now we choose
\begin{equation}
F(x):=\begin{cases}0,&\text{ if } x\in \Omega\setminus \omega_h,\\
\nabla \eta,&\text{ if } x\in \omega_h,\end{cases}
\end{equation}
where we recall that
\begin{equation}
\eta(x):=
\begin{cases}
\dfrac{1}{2}d(x)^2,&\text{ if }K_M\leq 0,\\
1-\cos \sqrt{a}d(x),&\text{ if }0<K_M\leq a.
\end{cases}
\end{equation}
Then $F$ is a Lipschitz vector field. If $K_M\leq 0$, we have $F(x)= h\cdot \nu(x)$ for $x\in \Sigma$, and then by Lemma \ref{lem9}
\begin{align*}
0&= h\int_\Sigma \left(\frac{\pt u}{\pt \nu}\right)^2 d\sigma- h\int_\Sigma |\nabla_\Sigma u|^2  d\sigma\\
&+\int_{\omega_h} (|\nabla u|^2 \Delta \eta -2\nabla^2 \eta(\nabla u,\nabla u))dv;
\end{align*}
while if $0<K_M\leq a$, we have $F(x)=\sqrt{a}\sin\sqrt{a} h\cdot \nu(x)$ for $x\in \Sigma$, and then again by Lemma \ref{lem9}
\begin{align*}
0&= \sqrt{a}\sin\sqrt{a} h\int_\Sigma \left(\frac{\pt u}{\pt \nu}\right)^2 d\sigma- \sqrt{a}\sin\sqrt{a} h\int_\Sigma |\nabla_\Sigma u|^2  d\sigma\\
&+\int_{\omega_h} (|\nabla u|^2 \Delta \eta -2\nabla^2 \eta(\nabla u,\nabla u))dv.
\end{align*}
In both cases we need to estimate the last term in the expressions, which is the content of the following lemma.
\begin{lem}\label{lem1}
Let $\Omega$ be a bounded domain in $M^{n+1}$ of class $C^2$ and $u\in H^1(\Omega)$.
\begin{enumerate}
  \item If $-a\leq K_M\leq 0$ and $0<\sqrt{a}\leq \kappa_-\leq II\leq \kappa_+$, then
  \begin{equation}
-\int_\Omega |\nabla u|^2 dv\leq \int_{\omega_h} (|\nabla u|^2 \Delta \eta -2\nabla^2 \eta(\nabla u,\nabla u))dv\leq n\int_\Omega |\nabla u|^2 dv.
\end{equation}
  \item If $0<  K_M\leq a$ and $0\leq \kappa_-\leq II\leq \kappa_+$, then
\begin{equation}
-a\int_\Omega |\nabla u|^2 dv\leq \int_{\omega_h} (|\nabla u|^2 \Delta \eta -2\nabla^2 \eta(\nabla u,\nabla u))dv\leq na\int_\Omega |\nabla u|^2 dv.
\end{equation}
\end{enumerate}
\end{lem}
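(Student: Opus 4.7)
The plan is to reduce the lemma to a pointwise inequality between the integrand and $|\nabla u|^2$, then integrate over $\omega_h$ and enlarge the domain to $\Omega$ using nonnegativity. The eigenvalue bounds on $\nabla^2\eta$ established in the previous lemma do all the work.

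First, at each point $x \in \omega_h$ I would diagonalize $\nabla^2\eta$ in an orthonormal basis $\{E_i\}_{i=1}^{n+1}$ of $T_xM$ with eigenvalues $\rho_1(x)\leq\cdots\leq\rho_{n+1}(x)$, and write $\nabla u = \sum_i u_i E_i$. Then
\begin{equation*}
|\nabla u|^2\Delta\eta - 2\nabla^2\eta(\nabla u,\nabla u) \;=\; \sum_{i=1}^{n+1} u_i^2\Bigl(\sum_{j=1}^{n+1}\rho_j - 2\rho_i\Bigr),
\end{equation*}
so the whole task is to bound the scalar coefficients $S_i := \sum_j \rho_j - 2\rho_i$ from above and below.

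For case (1), the preceding lemma gives $0\leq\rho_i\leq 1$ for $i\leq n$ and $\rho_{n+1}=1$. For $i\leq n$ I split off the $\rho_{n+1}$ term to write $S_i = 1 + \sum_{j\leq n,\,j\neq i}\rho_j - \rho_i$, which lies in $[0,n]$. For $i=n+1$, $S_{n+1} = \sum_{j\leq n}\rho_j - 1 \in [-1,n-1]$. Hence pointwise
\begin{equation*}
-u_{n+1}^2 \;\leq\; |\nabla u|^2\Delta\eta - 2\nabla^2\eta(\nabla u,\nabla u) \;\leq\; n\sum_i u_i^2 = n|\nabla u|^2,
\end{equation*}
and bounding $u_{n+1}^2 \leq |\nabla u|^2$ gives the lower bound $\geq -|\nabla u|^2$. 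Integrating over $\omega_h$ and using $\omega_h\subset\Omega$ together with the pointwise nonnegativity of $|\nabla u|^2$ yields both inequalities in (1).

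For case (2), the analogous bounds are $0\leq\rho_i\leq c(x)$ for $i\leq n$ and $\rho_{n+1}=c(x)$, where $c(x):=a\cos\sqrt{a}\,d(x)\leq a$. The same splitting gives $S_i\in[0,nc]$ for $i\leq n$ and $S_{n+1}\in[-c,(n-1)c]$, so the pointwise estimate becomes
\begin{equation*}
-c(x)|\nabla u|^2 \;\leq\; |\nabla u|^2\Delta\eta - 2\nabla^2\eta(\nabla u,\nabla u) \;\leq\; n\,c(x)|\nabla u|^2,
\end{equation*}
and bounding $c(x)\leq a$ followed by integration over $\omega_h\subset\Omega$ gives (2). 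There is no real obstacle here; the only point deserving attention is the lower bound, where one must exploit that the negative contribution occurs only in the single direction $E_{n+1}=\nabla d$ and is controlled by $|\nabla u|^2$ rather than by the sum of all components (otherwise the constants would be too large).
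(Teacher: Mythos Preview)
Your proposal is correct and follows essentially the same route as the paper: diagonalize $\nabla^2\eta$ at each point of $\omega_h$, reduce the integrand to a linear combination of the $\rho_i$ weighted by the squared components of $\nabla u$, and then invoke the previous lemma's bounds $0\le\rho_i\le 1$ (respectively $0\le\rho_i\le a\cos\sqrt{a}\,d$) with $\rho_{n+1}$ fixed at the top value. The only cosmetic difference is that the paper normalizes the components and bounds the whole sum $\sum_i\rho_i(1-2\tilde\alpha_i^2)$ between $\sum_{i\le n}\rho_i-\rho_{n+1}$ and $\sum_{i\ge 2}\rho_i-\rho_1$, whereas you bound each coefficient $S_i=\sum_j\rho_j-2\rho_i$ separately; the arithmetic and the resulting constants are identical.
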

\begin{proof}
In fact we will first prove a pointwise inequality. Then integrating it yields the result. Let $x\in \omega_h$. Denote by $\xi_i(x)$, $i=1,\dots,n+1$, the normalized eigenvectors of $\nabla^2\eta(x)$ corresponding to the eigenvalues $\rho_i(x)$, $i=1,\dots,n+1$. Then we can decompose $\nabla u (x)$ as
\begin{equation}
\nabla u(x)=\sum_{i=1}^{n+1} \alpha_i(x)\xi_i(x).
\end{equation}
Then
\begin{align*}
Q&:=|\nabla u|^2 \Delta \eta -2\nabla^2 \eta(\nabla u,\nabla u)\\
 &=|\nabla u|^2 \sum_{i=1}^{n+1}\rho_i(x)-2\sum_{i=1}^{n+1}\rho_i(x)\alpha_i(x)^2.
\end{align*}
Assume $\nabla u(x)\neq 0$. We can normalize $\alpha_i(x)$ to get
\begin{equation}
\tilde{\alpha}_i(x):=\frac{\alpha_i(x)}{\sqrt{\sum_{i=1}^{n+1}\alpha_i(x)^2}}=\frac{\alpha_i(x)}{|\nabla u(x)|}.
\end{equation}
Therefore we obtain
\begin{equation}
Q=\sum_{i=1}^{n+1}\rho_i(x)(1-2\tilde{\alpha}_i(x)^2)|\nabla u(x)|^2.
\end{equation}
Then direct computation yields (recall $\rho_1(x)\leq \rho_2(x)\leq \dots\leq \rho_{n+1}(x)$)
\begin{equation}
\sum_{i=1}^{n}\rho_i(x)-\rho_{n+1}(x)\leq \sum_{i=1}^{n+1}\rho_i(x)(1-2\tilde{\alpha}_i(x)^2)\leq \sum_{i=2}^{n+1}\rho_i(x)-\rho_1(x).
\end{equation}

Now in Case (1), for a lower bound, we notice that
\begin{align*}
\sum_{i=1}^{n+1}\rho_i(x)(1-2\tilde{\alpha}_i(x)^2)&\geq -1;
\end{align*}
while for an upper bound, we have
\begin{align*}
\sum_{i=1}^{n+1}\rho_i(x)(1-2\tilde{\alpha}_i(x)^2)&\leq n.
\end{align*}
Consequently,
\begin{equation}
-|\nabla u|^2 \leq Q \leq n|\nabla u|^2.
\end{equation}
Then by integrating the inequality we finish the proof of Case (1).

Case (2) can be handled similarly, with further using $\cos \sqrt{a}d(x)\leq 1$. So we complete the proof of the lemma.

\end{proof}
%

In the following we only deal with the case $-a\leq K_M\leq 0$ and $0<\sqrt{a}\leq \kappa_-\leq II\leq \kappa_+$, since the other case is analogous. The following proposition shows that the two integrals $\int_\Sigma |\nabla_\Sigma u|^2 d\sigma$ and $\int_\Sigma (\frac{\pt u}{\pt \nu})^2 d\sigma$ are equivalent.
\begin{prop}\label{prop1}
Assume $-a\leq K_M\leq 0$ and $0<\sqrt{a}\leq \kappa_-\leq II\leq \kappa_+$. Let $u\in H^2(\Omega)$ satisfy $\Delta u=0$ in $\Omega$ and normalized such that $\int_\Sigma u^2d\sigma =1$. Then we have
\begin{equation}
\int_\Sigma |\nabla_\Sigma u|^2 d\sigma\leq \int_\Sigma \left(\frac{\pt u}{\pt \nu}\right)^2 d\sigma+n\kappa_+\left(\int_\Sigma \left(\frac{\pt u}{\pt \nu}\right)^2 d\sigma\right)^{\frac{1}{2}},
\end{equation}
and
\begin{equation}
\left(\int_\Sigma \left(\frac{\pt u}{\pt \nu}\right)^2 d\sigma\right)^{\frac{1}{2}}\leq \frac{\kappa_+}{2}+\sqrt{\frac{\kappa_+^2}{4}+\int_\Sigma |\nabla_\Sigma u|^2 d\sigma}.
\end{equation}
\end{prop}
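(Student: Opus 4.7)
The plan is to combine the Pohozaev-type identity already specialized to $F = \nabla \eta$ (which was derived just before the statement in the excerpt) with the pointwise-to-integral estimate supplied by Lemma \ref{lem1}, and then optimize over the admissible width $h$ of the tubular neighbourhood.

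First I would rewrite the identity displayed in the excerpt as
\begin{equation*}
h\int_\Sigma\!\left(\tfrac{\pt u}{\pt\nu}\right)^2 d\sigma - h\int_\Sigma|\nabla_\Sigma u|^2 d\sigma = -\int_{\omega_h}\!\bigl(|\nabla u|^2\Delta\eta - 2\nabla^2\eta(\nabla u,\nabla u)\bigr)\,dv,
\end{equation*}
valid for every $h \in (0,\bar h)$ in case (1), where $\bar h = \kappa_+^{-1}$ as recorded in the proof of the preceding lemma. By Lemma \ref{lem1}(1) the right-hand side is bounded below by $-n\int_\Omega|\nabla u|^2\,dv$ and above by $\int_\Omega|\nabla u|^2\,dv$, so
\begin{equation*}
-\tfrac{n}{h}\int_\Omega|\nabla u|^2 dv \;\leq\; \int_\Sigma\!\left(\tfrac{\pt u}{\pt\nu}\right)^2 d\sigma - \int_\Sigma|\nabla_\Sigma u|^2 d\sigma \;\leq\; \tfrac{1}{h}\int_\Omega|\nabla u|^2 dv.
\end{equation*}

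Next I would bound the bulk Dirichlet energy from the boundary: since $\Delta u=0$, Green's identity and Cauchy--Schwarz together with the normalization $\int_\Sigma u^2\,d\sigma=1$ give
\begin{equation*}
\int_\Omega|\nabla u|^2 dv = \int_\Sigma u\,\tfrac{\pt u}{\pt\nu}\,d\sigma \leq \Bigl(\int_\Sigma\!\bigl(\tfrac{\pt u}{\pt\nu}\bigr)^2 d\sigma\Bigr)^{1/2}.
\end{equation*}
Inserting this into both inequalities and then letting $h \nearrow \bar h = 1/\kappa_+$ (using that both the bulk integral and the boundary integrals are continuous in $h$, while $1/h \to \kappa_+$) yields
\begin{equation*}
\int_\Sigma|\nabla_\Sigma u|^2 d\sigma - \int_\Sigma\!\bigl(\tfrac{\pt u}{\pt\nu}\bigr)^2 d\sigma \leq n\kappa_+ \Bigl(\int_\Sigma\!\bigl(\tfrac{\pt u}{\pt\nu}\bigr)^2 d\sigma\Bigr)^{1/2},
\end{equation*}
which is the first asserted inequality after transposition, and also
\begin{equation*}
\int_\Sigma\!\bigl(\tfrac{\pt u}{\pt\nu}\bigr)^2 d\sigma - \int_\Sigma|\nabla_\Sigma u|^2 d\sigma \leq \kappa_+ \Bigl(\int_\Sigma\!\bigl(\tfrac{\pt u}{\pt\nu}\bigr)^2 d\sigma\Bigr)^{1/2}.
\end{equation*}
The second displayed inequality is a quadratic in $x := \bigl(\int_\Sigma(\pt u/\pt\nu)^2 d\sigma\bigr)^{1/2}$ of the form $x^2 - \kappa_+ x - \int_\Sigma|\nabla_\Sigma u|^2\,d\sigma \leq 0$; solving it directly by the quadratic formula produces the second asserted bound.

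The only mildly delicate point is the passage $h \to \bar h$: Lemma \ref{lem1} required $h<\bar h$ so that the potential $\eta$ stays smooth on $\omega_h$, and the coefficient $1/h$ is the one we must push to its largest value to extract the sharp constant $\kappa_+$. I expect this to follow from dominated convergence since $|\nabla u|^2\chi_{\omega_h}$, $\Delta\eta$ and $\nabla^2\eta$ all remain bounded on $\omega_{\bar h}$ by the explicit Hessian estimates of the preceding lemma, so no additional regularity beyond $u \in H^2(\Omega)$ is needed. The case $0 < K_M \leq a$ is entirely parallel, with $h$ replaced by $\sqrt{a}\sin(\sqrt{a}h)$ on $\Sigma$, the upper width $\bar h$ taken so that $\tan(\sqrt{a}\bar h) = \sqrt{a}/\kappa_+$, and Lemma \ref{lem1}(2) replacing Lemma \ref{lem1}(1); one checks that in the limit $h \nearrow \bar h$ the analogous ratio produces the constant $\sqrt{a+\kappa_+^2}$ in place of $\kappa_+$.
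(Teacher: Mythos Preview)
Your argument is correct and follows the paper's proof essentially line for line: Pohozaev identity with $F=\nabla\eta$, the two-sided bound from Lemma~\ref{lem1}, Green's identity plus Cauchy--Schwarz with the normalization, the limit $h\nearrow\bar h=\kappa_+^{-1}$, and the quadratic formula. One small remark: after the Cauchy--Schwarz step nothing depends on $h$ except the explicit coefficient $1/h$, so the passage $h\to\bar h$ is elementary (you are minimizing $1/h$, not maximizing it) and no dominated-convergence argument is needed.
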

\begin{proof}
For the first inequality, by Lemma \ref{lem1}, we have
\begin{align*}
\int_\Sigma |\nabla_\Sigma u|^2 d\sigma &= \int_\Sigma \left(\frac{\pt u}{\pt \nu}\right)^2 d\sigma+\frac{1}{ h} \left(\int_{\omega_h} (|\nabla u|^2 \Delta \eta -2\nabla^2 \eta(\nabla u,\nabla u))dv\right)\\
&\leq \int_\Sigma \left(\frac{\pt u}{\pt \nu}\right)^2 d\sigma+\frac{n}{h} \int_{\Omega} |\nabla u|^2dv\\
&=\int_\Sigma \left(\frac{\pt u}{\pt \nu}\right)^2 d\sigma+\frac{n}{h} \int_{\Sigma} u\frac{\pt u}{\pt\nu}d\sigma\\
&\leq \int_\Sigma \left(\frac{\pt u}{\pt \nu}\right)^2 d\sigma+\frac{n}{h} \left(\int_\Sigma \left(\frac{\pt u}{\pt \nu}\right)^2 d\sigma\right)^{\frac{1}{2}},
\end{align*}
where in the last step we have used the Cauchy-Schwarz inequality. Then letting $h\rightarrow \bar{h}=\kappa_+^{-1}$ we get the first inequality.

For the second one, we get
\begin{align*}
\int_\Sigma \left(\frac{\pt u}{\pt \nu}\right)^2 d\sigma &=\int_\Sigma |\nabla_\Sigma u|^2 d\sigma-\frac{1}{h} \left(\int_{\omega_h} (|\nabla u|^2 \Delta \eta -2\nabla^2 \eta(\nabla u,\nabla u))dv\right)\\
&\leq \int_\Sigma |\nabla_\Sigma u|^2 d\sigma+\frac{1}{h} \int_{\Omega} |\nabla u|^2dv\\
&=\int_\Sigma |\nabla_\Sigma u|^2 d\sigma+\frac{1}{h} \int_{\Sigma} u\frac{\pt u}{\pt\nu}d\sigma\\
&\leq \int_\Sigma |\nabla_\Sigma u|^2 d\sigma+\frac{1}{h} \left(\int_\Sigma \left(\frac{\pt u}{\pt \nu}\right)^2 d\sigma\right)^{\frac{1}{2}},
\end{align*}
where again the Cauchy-Schwarz inequality has been used. Solving $\left(\int_\Sigma \left(\frac{\pt u}{\pt \nu}\right)^2 d\sigma\right)^{\frac{1}{2}}$ from it we have
\begin{equation}
\left(\int_\Sigma \left(\frac{\pt u}{\pt \nu}\right)^2 d\sigma\right)^{\frac{1}{2}}\leq \frac{1}{2h}+\sqrt{\frac{1}{4h^2}+\int_\Sigma |\nabla_\Sigma u|^2 d\sigma}.
\end{equation}
Likewise letting $h\rightarrow \bar{h}=\kappa_+^{-1}$ we get the second inequality.

\end{proof}

\section{Proofs of main results}\label{sec4}

\begin{proof}[Proof of Theorem \ref{thm1}]
The proof mainly utilizes Proposition \ref{prop1} and the min-max variational characterizations of the eigenvalues of problems \eqref{eqS} and \eqref{eqL}, i.e.
\begin{equation}
\sigma_j=\inf_{\substack{V\subset H^1(\Omega),\\dim V=j+1}}\sup_{\substack{ 0\neq u\in V,\\ \int_\Sigma u^2d\sigma=1}}\int_\Omega |\nabla u|^2 dv,
\end{equation}
for all $j\geq 0$, and
\begin{equation}\label{VCL}
\lambda_j=\inf_{\substack{V\subset H^1(\Sigma),\\dim V=j+1}}\sup_{ \substack{0\neq u\in V,\\ \int_\Sigma u^2d\sigma=1}}\int_\Sigma |\nabla_\Sigma u|^2 d\sigma,
\end{equation}
for all $j\geq 0$. More precisely, take the case $-a\leq K_M\leq 0$ and $0<\sqrt{a}\leq II\leq \kappa_+$ for example. Assume that $\{u_k\}_{k=0}^\infty\subset H^1(\Omega)$ is the sequence of eigenfunctions of the Steklov problem \eqref{eqS} corresponding to the eigenvalues $\{\sigma_k\}_{k=0}^\infty$. Moreover assume that $\int_\Sigma u_ku_ld\sigma=\delta_{kl}$ for $k,l\geq 0$. Then for fixed $j\geq 0$, considering $V=span\{u_0,u_1,\dots,u_j\}$, by the min-max variational characterization \eqref{VCL}, we have
\begin{align*}
\lambda_j&\leq \sup_{\sum_{k=0}^{j}c_k^2=1}\int_\Sigma \bigg|\nabla_\Sigma \left(\sum_{k=0}^{j}c_ku_k\right)\bigg|^2d\sigma\\
&\leq \sup_{\sum_{k=0}^{j}c_k^2=1}\left(\int_\Sigma \left(\frac{\pt \left(\sum_{k=0}^{j}c_ku_k\right)}{\pt \nu}\right)^2 d\sigma+n\kappa_+\left(\int_\Sigma \left(\frac{\pt \left(\sum_{k=0}^{j}c_ku_k\right)}{\pt \nu}\right)^2 d\sigma\right)^{\frac{1}{2}}\right)\\
&=\sup_{\sum_{k=0}^{j}c_k^2=1}\left(\sum_{k=0}^{j}c_k^2\sigma_k^2+n\kappa_+\left(\sum_{k=0}^{j}c_k^2\sigma_k^2\right)^{\frac{1}{2}}\right)\\
&=\sigma_j^2+n\kappa_+\sigma_j.
\end{align*}
Here the second inequality is due to Proposition \ref{prop1}. The other inequality can be proved similarly. See also \cite{PS16} for more details. So we finish the proof.
\end{proof}
To prove Corollary \ref{corr1}, we recall the following Weyl-type estimate due to P.~Buser \cite{Bus79}. (See also \cite{Kor93} and \cite{Has11}.)
\begin{thm}[\cite{Bus79}]
Let $(\Sigma, g)$ be a compact Riemannian manifold without boundary of dimension $n$ such that $Ric_g(\Sigma)\geq -(n-1)\kappa^2$, $\kappa\geq 0$. Then
\begin{equation}
\lambda_j\leq \frac{(n-1)\kappa^2}{4}+c_n\left(\frac{j}{|\Sigma|}\right)^{\frac{2}{n}},
\end{equation}
where $c_n>0$ depends only on $n$.
\end{thm}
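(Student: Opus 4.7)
The plan is to apply the min-max variational characterization of $\lambda_j$ to an explicit $(j+1)$-dimensional trial space built out of disjointly supported bump functions. Concretely, I would produce functions $\varphi_0,\dots,\varphi_j \in H^1(\Sigma)$ with pairwise disjoint supports. Because such $\varphi_k$ are then $L^2$-orthogonal and the Dirichlet energy of any linear combination splits additively, \eqref{VCL} reduces the task to uniformly bounding each individual Rayleigh quotient:
\begin{equation*}
\lambda_j \leq \max_{0\leq k\leq j} \frac{\int_\Sigma |\nabla_\Sigma \varphi_k|^2\, d\sigma}{\int_\Sigma \varphi_k^2\, d\sigma}.
\end{equation*}

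To manufacture the supports I would rely on Bishop--Gromov volume comparison. The hypothesis $Ric_g \geq -(n-1)\kappa^2$ forces the volume of every geodesic ball $B(x,r)\subset \Sigma$ to be bounded above by the volume of the ball of radius $r$ in the $n$-dimensional hyperbolic model of constant sectional curvature $-\kappa^2$. Given $j$, pick a scale $r \sim (|\Sigma|/j)^{1/n}$ and run a greedy $2r$-separated packing; Bishop--Gromov ensures the packing can be extracted with at least $j+1$ centers $x_0,\dots,x_j$ such that the balls $B(x_k,r)$ are pairwise disjoint inside $\Sigma$.

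On each $B(x_k,r)$ I would take $\varphi_k$ to be a smooth cutoff, identically $1$ on $B(x_k,r/2)$ and supported in $B(x_k,r)$, with $|\nabla_\Sigma \varphi_k|$ of order $1/r$. Estimating its Rayleigh quotient requires a Buser-type local Poincar\'e inequality with a Ricci-dependent constant. In the small-scale regime $r\lesssim 1/\kappa$ this gives a Euclidean-like bound of order $r^{-2}$, while for larger $r$ the hyperbolic comparison injects an additive $\kappa^2$-correction, producing a Rayleigh quotient of the form $\tfrac{(n-1)\kappa^2}{4}+c_n r^{-2}$. Substituting the chosen $r\sim (|\Sigma|/j)^{1/n}$ then yields the claimed estimate.

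The principal obstacle is the two-scale balancing in the packing step: the radius $r$ must be small enough for Bishop--Gromov to deliver near-linear volume growth, and hence the required number of centers, yet large enough for the local Poincar\'e inequality to give a sharp $r^{-2}$ gradient bound without swallowing the additive $\tfrac{(n-1)\kappa^2}{4}$. The cleanest route is probably a Vitali-type covering coupled with a stopping-time argument that partitions $\Sigma$ into chunks of controlled diameter and nearly equal volume; a quicker but cruder alternative is to go through Cheeger's inequality after bounding the Cheeger constant via Bishop--Gromov. Either way, carefully tracking the curvature contribution through the cutoff Rayleigh quotient, so that it emerges as the additive constant rather than being mixed into the $r^{-2}$ term, is the delicate point.
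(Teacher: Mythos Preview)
The paper does not prove this statement at all: it is quoted verbatim as a result of Buser \cite{Bus79} (with \cite{Kor93,Has11} as additional references) and used as a black box in the proof of Corollary~\ref{corr1}. So there is no ``paper's own proof'' to compare against; your task here was simply to cite the theorem, not to reprove it.

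That said, the sketch you give is a reasonable outline of the standard disjoint-support test-function method that underlies Buser's original argument and its later refinements by Korevaar and others. The packing via Bishop--Gromov, the cutoff test functions with $|\nabla\varphi_k|\lesssim 1/r$, and the reduction to individual Rayleigh quotients are all on the right track. The part you flag as delicate---getting the additive $\tfrac{(n-1)\kappa^2}{4}$ to separate cleanly from the $r^{-2}$ term---is indeed where the work lies, and in practice one does not get it directly from a crude cutoff estimate: the sharp constant comes from a more careful choice of radial profile (essentially the first eigenfunction of the model hyperbolic ball) rather than a generic bump, or alternatively from Buser's isoperimetric/Cheeger route. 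Your sketch as written would produce a bound of the form $C_n\kappa^2 + c_n(j/|\Sigma|)^{2/n}$ with some dimensional constant in front of $\kappa^2$, but not obviously the precise $(n-1)/4$. For the purposes of this paper that would be harmless, since only the case $\kappa=0$ is ever invoked.
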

Now we are ready to prove Corollary \ref{corr1}.
\begin{proof}[Proof of Corollary \ref{corr1}]
By Gauss equation, for an orthonormal frame $\{e_i\}_{i=1}^n\subset T\Sigma$, the Riemannian curvature tensors $R^\Sigma$ on the boundary $\Sigma$ and $R$ on $M$ are related by 
\begin{equation}
R^\Sigma_{ijkl}=R_{ijkl}+II_{ik}II_{jl}-II_{il}II_{jk},
\end{equation}
where $II_{ij}$ is the component of the second fundamental form $II$ of $\Sigma\subset M$ with respect to $\{e_i\}_{i=1}^n$. See the definition \eqref{eq-SSF}. Then it is easy to see for both cases in Corollary \ref{corr1} we have $Ric_g(\Sigma)\geq 0$. So $\lambda_j\leq c_n\left(\dfrac{j}{|\Sigma|}\right)^{\frac{2}{n}}$. Then using Theorem \ref{thm1}, in the case $-a\leq K_M\leq 0$ and $0<\sqrt{a}\leq \kappa_-\leq II\leq \kappa_+$, we get
\begin{equation}
\sigma_j\leq \kappa_++\sqrt{\lambda_j}\leq \kappa_++c_n^\frac{1}{2} \left(\frac{j}{|\Sigma|}\right)^{\frac{1}{n}},
\end{equation}
which is as claimed. The other case can be dealt with similarly.
\end{proof}

\bibliographystyle{Plain}

\end{document}